\setlist{topsep=3pt,partopsep=0pt,itemsep=1pt,parsep=0pt}
\newtheorem{Theorem}{Theorem}
\newtheorem{Lemma}{Lemma}
\def \leq {\leqslant}
\def \geq {\geqslant}
\let\oldproofname=\proofname
\renewcommand{\proofname}{\rm\bf{\oldproofname}}
\begin{document}

\title{Embedding arbitrary edge-colorings of hypergraphs into regular colorings}

\author[a]{Xiaomiao Wang}
\author[b]{Tao Feng}
\author[c]{Shixin Wang}

\affil[a]{School of Mathematics and Statistics, Ningbo University, Ningbo 315211, P.R. China}
\affil[b]{School of Mathematics and Statistics, Beijing Jiaotong University, Beijing 100044, P.R. China}
\affil[c]{UP FAMNIT, University of Primorska, Koper 6000, Slovenia}

\renewcommand*{\Affilfont}{\small\it}
\renewcommand\Authands{ and }
\date{}

\maketitle
\footnotetext[1]{E-mail address: wangxiaomiao@nbu.edu.cn} %; Supported by Zhejiang Provincial Natural Science Foundation of China under Grant LY24A010001}
\footnotetext[2]{E-mail address: tfeng@bjtu.edu.cn; Supported by NSFC under Grant 12271023}
\footnotetext[3]{E-mail address: shixin.wang@famnit.upr.si; Supported by research program P1-0285 and research project J1-50000 by the Slovenian Research and Innovation Agency}

\begin{abstract}
For $\textbf{r}=(r_1,\ldots,r_k)$, an \textbf{r}-factorization of the complete $\lambda$-fold $h$-uniform $n$-vertex hypergraph $\lambda K_n^h$ is a partition of the edges of $\lambda K_n^h$ into $F_1,\ldots, F_k$ such that $F_j$ is $r_j$-regular and spanning for $1\leq j\leq k$. This paper shows that for $n>\frac{m-1}{1-2^{\frac{1}{1-h}}}+h-1$, a partial $\textbf{r}$-factorization of $\lambda K_m^h$ can be extended to an $\textbf{r}$-factorization of $\lambda K_n^h$ if and only if the obvious necessary conditions are satisfied.
\end{abstract}

\noindent {\bf Keywords}: Baranyai's theorem; factorization; edge-coloring; embedding.

\section{Introduction}\label{sec:intro}

A {\em hypergraph} $\mathcal{G}=(V,E)$ consists of a vertex set $V=V(\mathcal G)$, and an edge multiset $E=E(\mathcal G)$ which is a family of non-empty subsets of $V$. The \emph{multiplicity} of an edge $e$ in $\mathcal{G}$, written as $\textbf{mult}_{\mathcal{G}}(e)$, is the number of occurrences of $e$ in $\mathcal{G}$. The \emph{degree} $\textbf{deg}_{\mathcal{G}}(x)$ of a vertex $x\in V$ is the number of edges of $\mathcal{G}$ containing $x$. If every vertex degree in $\mathcal{G}$ is exactly $r$, then $\mathcal{G}$ is $r$-\emph{regular}. Let $\lambda K_n^h$ be the $n$-vertex hypergraph whose edge multiset $E$ is the collection of all $h$-subsets of the vertex set, each $h$-subset occurring exactly $\lambda$ times in $E$.

For $0<a<b$, denote by $[a,b]$ the set of integers from $a$ to $b$. A $q$-\emph{edge-coloring} of $\mathcal{G}=(V,E)$ is a mapping $f: E\rightarrow[1,q]$, and $f$ is {\em extended to} (or {\em embedded into}) a $k$-edge-coloring $g$ of $\mathcal{H}\supseteq\mathcal{G}$ if $f(e)=g(e)$ for each $e\in E$. For $\textbf{r}=(r_1,\ldots,r_k)$, an $\textbf{r}$-\emph{factorization} of $\mathcal{G}$ is a $k$-edge-coloring of $\mathcal{G}$ such that the color class $j$ for $j\in[1,k]$, written as $\mathcal{G}(j)$, induces an $r_j$-regular spanning subhypergraph of $\mathcal{G}$, called an {\em $r_j$-factor} of $\mathcal{G}$. $\mathcal{G}$ is said to be $\textbf{r}$-{\em factorable} if it admits an $\textbf{r}$-factorization. An $r$-factorization is an $\textbf{r}$-factorization with $r_1=\cdots=r_k=r$. For $\textbf{r}=(r_1,\ldots,r_k)$, a \emph{partial} $\textbf{r}$-\emph{factorization} of $\mathcal{G}$ is a $k$-edge-coloring of $\mathcal{G}$ where for every $j\in[1,k]$, the degree of each vertex of $\mathcal{G}(j)$ is at most $r_j$.

The problem of constructing $1$-factorization of $K_n^h$ dates back to the $18$-th century. In connection with Kirkman's famous Fifteen Schoolgirls Problem, Sylvester remarked in 1850 that $K_{15}^3$ is $1$-factorable. Clearly a necessary condition for $K_n^h$ admitting a 1-factorization is $h\mid n$. It turns out that this is also sufficient for $h=2$ (folklore) and $h=3$ (proved by Peltesohn \cite{p} in 1936). The sufficiency for general $h$ was eventually established by Baranyai \cite{Baranyai} in 1975. Furthermore, it was shown in \cite[Corollary 2]{Baranyai79} that $K_n^h$ is $r$-factorable if and only if $h\mid rn$ and $\frac{rn}{h}\mid \binom{n}{h}$.

Baranyai and Brouwer \cite{Baranyai-B} conjectured that a 1-factorization of $K_m^h$ can be extended to a 1-factorization of $K_n^h$ with $n>m$ if and only if $n\geq 2m$, $h\mid m$ and $h\mid n$. They proved this for $h=2,3$, and also for arbitrary $h$ when $n$ is sufficiently large. Their conjecture was settled by H\"{a}agkvist and Hellgren \cite[Theorem 2]{hh}. For general $r$, Bahmanian and Newman \cite[Theorem 1.7]{BN} showed that when $\gcd(m,n,h)=\gcd(m,h)$, an $r$-factorization of $K_m^h$ can be extended to an $r$-factorization of $K_n^h$  with $n>m$ if and only if $n\geq 2m$, $h\mid rm$, $h\mid rn$, $r\mid \binom{m-1}{h-1}$ and $r\mid \binom{n-1}{h-1}$.

Suppose that a partial $\textbf{r}$-factorization $P$ of $\lambda K_m^h$ is extended to an $\textbf{r}$-factorization of $\lambda K_n^h$, where $\textbf{r}=(r_1,\ldots,r_k)$. For $j\in[1,k]$, the existence of an $r_j$-factor in $\lambda K_n^h$ implies that $h\mid r_j n$. Since the degree of each vertex in $\lambda K_n^h$ is $\lambda\binom{n-1}{h-1}$, we have $\sum_{j=1}^kr_j=\lambda\binom{n-1}{h-1}$. Thus, in order to extend $P$ to an $\textbf{r}$-factorization of $\lambda K_n^h$, the following conditions are necessary.
\begin{align}\label{nec}
h\mid r_j n, \ \ \ \forall j\in[1,k],\ \ \ \ \emph{and} \ \ \ \sum_{j=1}^kr_j=\lambda\binom{n-1}{h-1}.
\end{align}
A quadruple $(n,h,\lambda,\textbf{r})$ is \emph{admissible} if it satisfies the above conditions. Bahmanian and Johnsen \cite[Theorem 1.1]{arxiv202209} proved that when $n\geq(h-1)(2m-1)$, these necessary conditions are also sufficient. This paper is devoted to improving the lower bound for $n$.
% Our main result is the following theorem.

\begin{Theorem}\label{main-r-r}
For $n>\frac{m-1}{1-2^{\frac{1}{1-h}}}+h-1$ and $h\geq 2$, a partial $\textbf{r}$-factorization of $\lambda K_m^h$ can be extended to an $\textbf{r}$-factorization of $\lambda K_n^h$ if and only if $(n,h,\lambda,\textbf{r})$ is admissible.
\end{Theorem}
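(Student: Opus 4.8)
The necessity of admissibility is established above, so only sufficiency is at stake, and the plan is to run the amalgamation--detachment strategy, with the size hypothesis entering at exactly one feasibility check.

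\emph{Reduction to an amalgamation lemma.} Put $s=n-m$ and adjoin a new symbol $\alpha$. I would amalgamate the $s$ vertices of $\lambda K_n^h$ lying outside $V(\lambda K_m^h)$ into $\alpha$, obtaining a hypergraph $\A$ on the $m+1$ vertices $V(\lambda K_m^h)\cup\{\alpha\}$ whose edges are multisets carrying up to $h$ copies of $\alpha$: for each $i\in[0,h]$ and each $i$-subset $T\subseteq V(\lambda K_m^h)$, $\A$ has $\lambda\binom{s}{h-i}$ edges equal to $T$ together with $h-i$ further copies of $\alpha$. An $\textbf r$-factorization of $\lambda K_n^h$ extending the given partial $\textbf r$-factorization $P$ amalgamates to a $k$-edge-coloring $c$ of $\A$ such that: (i) $c$ agrees with $P$ on $\A[V(\lambda K_m^h)]=\lambda K_m^h$; (ii) in every color $j$, each $v\in V(\lambda K_m^h)$ has degree $r_j$ and $\alpha$ has degree $sr_j$; (iii) for every color $j$ and every $i\in[0,h-1]$, the color-$j$ edges carrying exactly $h-i$ copies of $\alpha$ are distributed among the $\binom mi$ subsets $T$ as uniformly as the degree conditions (ii) permit. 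Conversely, a coloring of $\A$ obeying (i)--(iii) detaches, via a hypergraph detachment theorem of Nash-Williams/Bahmanian type, back into an $\textbf r$-factorization of $\lambda K_n^h$ extending $P$: one splits $\alpha$ into $s$ vertices so that, for each $T$ with $|T|=i$, the $\lambda\binom s{h-i}$ copies of ``$T$ plus $h-i$ copies of $\alpha$'' go to the $\binom s{h-i}$ possible $(h-i)$-subsets of new vertices, exactly $\lambda$ to each, while every color class becomes $r_j$-regular. So the whole theorem reduces to producing a coloring of $\A$ as in (i)--(iii).

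\emph{Constructing the coloring of $\A$.} I would color the $\alpha$-incident edges of $\A$ in $h$ rounds $i=h-1,h-2,\dots,0$: in round $i$, color the edges ``$T$ plus $h-i$ copies of $\alpha$'' with $|T|=i$, which form the scaled complete hypergraph $\lambda\binom s{h-i}K_m^i$. Fixing in advance how much of each color's quota at $\alpha$ and at each $v$ is to be discharged in round $i$, round $i$ becomes: partition $\lambda\binom s{h-i}K_m^i$ into $k$ parts with prescribed vertex degrees (and part sizes). Baranyai's theorem, in its degree-prescribed/equitable form, carries this out provided the prescribed data are feasible. Distributing the per-round targets proportionally to the available edges, the binding requirement reduces, after some bookkeeping, to
\[
\binom{n-1}{h-1}<2\binom{n-m}{h-1}
\]
(for $h=2$ this is just $n\geq 2m$), and the stated hypothesis is a convenient closed form for it: $n>\frac{m-1}{1-2^{\frac1{1-h}}}+h-1$ is equivalent to $\bigl(\frac{n-h+1}{n-m-h+2}\bigr)^{h-1}<2$, and since $\binom{n-1}{h-1}/\binom{n-m}{h-1}=\prod_{j=0}^{h-2}\frac{n-1-j}{n-m-j}$ with every factor at most $\frac{n-h+1}{n-m-h+2}$, the displayed inequality follows.

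\emph{The hard part.} The real work is the feasibility bookkeeping just mentioned. A single Baranyai split can fail when one color's residual demand is very unevenly distributed over the $m$ old vertices (a vertex can ``want'' far more color-$j$ incidence than all other old vertices together can supply in that round), so the targets for the $h$ rounds must be chosen carefully --- together with an induction keeping every residual demand between $0$ and the currently available supply --- so that the demands get smoothed out and every round stays feasible. Proving that this feasible window is nonempty is exactly the step that consumes $\binom{n-1}{h-1}<2\binom{n-m}{h-1}$; squeezing out the sharpest such window, rather than the cruder one behind the earlier threshold $n\geq(h-1)(2m-1)$, is where the improvement comes from. Once the colored $\A$ is built, the detachment is a routine appeal to existing machinery.
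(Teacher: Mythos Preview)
Your amalgamation--color--detach skeleton matches the paper's, and your reduction of the size hypothesis to $\binom{n-1}{h-1}<2\binom{n-m}{h-1}$ is correct and is exactly the paper's Lemma~\ref{for h-2}. Two substantive remarks.

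\textbf{Condition (iii) is superfluous.} The detachment lemma actually used (Lemma~\ref{thm:B}) needs only the degree data in each color class and the \emph{total} multiplicity of each multiset edge in $\A$; the latter is baked into the construction of $\A$ and has nothing to do with how the colors sit on the individual $T$'s. No per-$T$ uniformity inside color classes is required.

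\textbf{The coloring step: you are working much harder than necessary.} The paper does \emph{not} preassign per-round degree targets and then invoke a degree-prescribed Baranyai theorem in each round. It simply colors the $\alpha$-incident edges \emph{greedily}, one edge at a time, handling all $*\alpha^i$-edges before moving to $*\alpha^{i+1}$-edges, for $i=1,\dots,h-1$. Given an uncolored edge $X\cup\{\alpha^i\}$ with $|X|=h-i$, pick any color $j$ for which every $x\in X$ still has $\deg_{\mathcal H(j)}(x)<r_j$. If no such $j$ exists, then for each $j$ some $x\in X$ already has $\deg_{\mathcal H(j)}(x)=r_j$, whence
\[
\sum_{j=1}^k\sum_{x\in X}\deg_{\mathcal H(j)}(x)\ \geq\ \sum_{j=1}^k r_j\ =\ \lambda\binom{n-1}{h-1},
\]
while counting all $*\alpha^l$-edges with $l\leq i$ through $X$ gives the strict upper bound
\[
\sum_{j=1}^k\sum_{x\in X}\deg_{\mathcal H(j)}(x)\ <\ \lambda(h-i)\sum_{l=1}^{i+1}\binom{m-1}{h-l}\binom{n-m}{l-1}.
\]
The required contradiction is thus the family of inequalities
\[
(h-i)\sum_{l=1}^{i+1}\binom{m-1}{h-l}\binom{n-m}{l-1}\ \leq\ \binom{n-1}{h-1},\qquad i\in[1,h-1],
\]
which is the paper's Lemma~\ref{inequality}. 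Its binding case is $i=h-2$, which via Vandermonde is exactly your inequality $\binom{n-1}{h-1}<2\binom{n-m}{h-1}$; the cases $i\leq h-3$ follow from this one by a short monotonicity argument (showing $f(i+1)-f(i)>0$). After all $*\alpha^{h-1}$-edges are colored, a one-line count forces $\deg_{\mathcal H(j)}(x)=r_j$ for every old vertex $x$; then the $\alpha^h$-edges are distributed among the colors by the obvious formula to make $\deg_{\mathcal H(j)}(\alpha)=r_j(n-m)$, and one detaches.

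So the ``hard part'' you isolate --- choosing compatible per-round targets and proving a feasibility window is nonempty --- simply does not arise: greed works, with no bookkeeping beyond the single family of inequalities above. Your plan may be salvageable, but as written it leaves precisely the crucial step undone, and it is a strictly harder route to the same destination.
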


Comparing the two lower bounds for $n$, we have that $$(h-1)(2m-1)-(\frac{m-1}{1-2^{\frac{1}{1-h}}}+h-1)=(m-1)(2h-\frac{1}{1-2^{\frac{1}{1-h}}}-2),$$
which is strictly increasing for any given $m\geq 2$ and is greater than 0 for any $m\geq h\geq 3$. The problem of embedding a partial $r$-factorization of $K_m^h$ into an $r$-factorization of $K_n^h$ has been settled by Rodger and Wantland \cite{rw} for $h=2$ and $n\geq 2m$, by Bahmanian and Rodger \cite{br2013} for $h=3$ and $n\geq 3.414214m$, by Bahmanian \cite{a-jgt-2019} for $h=4$ and $n\geq4.847323m$, and for $h=5$ and $n\geq6.285214m$. Theorem \ref{main-r-r} unifies these results. One can check that our lower bound gives $n>2m-1$ for $h=2$, $n>3.41421m-1.41421$ for $h=3$, $n>4.84732m-1.84732$ for $h=4$, and $n>6.28521m-2.28521$ for $h=5$.

Section \ref{section-Preliminaries} provides a combinatorial identity and two combinatorial inequalities that are utilized in this paper. Theorem \ref{main-r-r} is proved in Section \ref{section-main}. Section \ref{sec:concluding} concludes the paper.

\section{Preliminaries} \label{section-Preliminaries}

The following combinatorial identity is often referred to as {\em Vandermonde's convolution}. For positive integers $n\geq m\geq h$,

\begin{align}\label{equ:comb_indetity}
\binom{n}{h}=\sum_{i=0}^{h}\binom{m}{h-i}\binom{n-m}{i}.
\end{align}

\begin{Lemma}\label{for h-2}
Let $n$, $m$ and $h$ be integers such that $n\geq m\geq h\geq 2$ and $n>\frac{m-1}{1-2^{\frac{1}{1-h}}}+h-1$. Then $2\binom{n-m}{h-1}>\binom{n-1}{h-1}.$
\end{Lemma}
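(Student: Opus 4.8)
The plan is to reduce the claimed inequality $2\binom{n-m}{h-1}>\binom{n-1}{h-1}$ to a statement about the ratio $\binom{n-1}{h-1}/\binom{n-m}{h-1}$ and then bound that ratio using the hypothesis on $n$. Writing both binomial coefficients as products, we have
\begin{align*}
\frac{\binom{n-1}{h-1}}{\binom{n-m}{h-1}}=\prod_{i=0}^{h-2}\frac{n-1-i}{n-m-i}.
\end{align*}
Each factor $\frac{n-1-i}{n-m-i}$ is a decreasing function of $i$ on the relevant range (since $m\geq 1$), so every factor is at most the $i=0$ term $\frac{n-1}{n-m}$; hence the product is at most $\left(\frac{n-1}{n-m}\right)^{h-1}$. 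Therefore it suffices to show
\begin{align*}
\left(\frac{n-1}{n-m}\right)^{h-1}<2,
\end{align*}
which, on taking $(h-1)$-st roots, is equivalent to $\frac{n-1}{n-m}<2^{\frac{1}{h-1}}$.

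It remains to check that the hypothesis $n>\frac{m-1}{1-2^{\frac{1}{1-h}}}+h-1$ implies $\frac{n-1}{n-m}<2^{\frac{1}{h-1}}$. First note $2^{\frac{1}{1-h}}\in(0,1)$ for $h\geq 2$, so $1-2^{\frac{1}{1-h}}>0$ and the hypothesis is a genuine lower bound; moreover it already forces $n>m$, so $n-m>0$ and all the manipulations above are valid. Rearranging $\frac{n-1}{n-m}<2^{\frac{1}{h-1}}$: multiply through by $n-m>0$ to get $n-1<2^{\frac{1}{h-1}}(n-m)$, i.e. $(2^{\frac{1}{h-1}}-1)(n) > 2^{\frac{1}{h-1}}m-1$, and solving for $n$ gives $n>\frac{2^{\frac{1}{h-1}}m-1}{2^{\frac{1}{h-1}}-1}$. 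A short algebraic simplification (dividing numerator and denominator by $2^{\frac{1}{h-1}}$ and using $2^{-\frac{1}{h-1}}=2^{\frac{1}{1-h}}$) rewrites this threshold as $\frac{m-2^{\frac{1}{1-h}}}{1-2^{\frac{1}{1-h}}}=\frac{m-1}{1-2^{\frac{1}{1-h}}}+1\le \frac{m-1}{1-2^{\frac{1}{1-h}}}+h-1$ for $h\ge 2$, so the hypothesis indeed implies the required bound.

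The one genuinely non-routine point — though still elementary — is the reduction step: one must be careful that bounding the product of ratios by the power of its largest factor is in the correct direction (it gives an upper bound on $\binom{n-1}{h-1}/\binom{n-m}{h-1}$, which is exactly what we want for the strict inequality). Everything else is bookkeeping: confirming signs of denominators from the hypothesis, and matching the rearranged threshold to the stated one. I would present it in the order above — ratio identity, factorwise bound, reduction to $\frac{n-1}{n-m}<2^{1/(h-1)}$, then the arithmetic equivalence with the hypothesis — since that makes the logical dependence transparent and isolates the only inequality that actually uses $h\ge 2$ in an essential way (namely $h-1\ge 1$ so that the exponent and root are well-defined and monotone).
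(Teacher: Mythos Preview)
Your overall strategy---write the ratio of the two binomials as a product of $h-1$ linear-fraction factors, bound the product by a power of one extremal factor, then match the resulting threshold to the hypothesis---is exactly the paper's approach. But there is a genuine error in the monotonicity step. You assert that $\dfrac{n-1-i}{n-m-i}$ is decreasing in $i$; in fact, since
\[
\frac{n-1-i}{n-m-i}-\frac{n-1-(i-1)}{n-m-(i-1)}=\frac{m-1}{(n-m-i)(n-m-i+1)}>0
\]
for $m\geq 2$, the factors are \emph{increasing} in $i$. Hence bounding each factor by the $i=0$ term $\dfrac{n-1}{n-m}$ gives a \emph{lower} bound on the product, not the upper bound you need, and the inequality $\displaystyle\prod_{i=0}^{h-2}\frac{n-1-i}{n-m-i}\leq\Bigl(\frac{n-1}{n-m}\Bigr)^{h-1}$ is false in general.

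The fix is to bound by the largest factor, at $i=h-2$, giving
\[
\frac{\binom{n-1}{h-1}}{\binom{n-m}{h-1}}\leq\Bigl(\frac{n-h+1}{n-m-h+2}\Bigr)^{h-1},
\]
and then show $\dfrac{n-h+1}{n-m-h+2}<2^{1/(h-1)}$. Rearranging this (exactly as you did, but with $n-h+1$ in place of $n-1$) yields the threshold $n>\dfrac{m-1}{1-2^{1/(1-h)}}+h-1$ on the nose, with no slack---which also explains why the stated hypothesis has the ``$+h-1$'' rather than the ``$+1$'' your computation produced. This corrected version is precisely the paper's argument, phrased as a lower bound on the reciprocal ratio $\binom{n-m}{h-1}/\binom{n-1}{h-1}$.
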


\begin{proof} Since $n>\frac{m-1}{1-2^{\frac{1}{1-h}}}+h-1$,
\begin{align*}
\frac{\binom{n-m}{h-1}}{\binom{n-1}{h-1}}&=\frac{(n-m)!(n-h)!}{(n-1)!(n-m-h+1)!}=
\prod_{i=1}^{h-1}\frac{n-m-i+1}{n-i}=\prod_{i=1}^{h-1}\left(1-\frac{m-1}{n-i}\right)\\
&\geq \prod_{i=1}^{h-1}\left(1-\frac{m-1}{n-h+1}\right)=\left(1-\frac{m-1}{n-h+1}\right)^{h-1}>\frac{1}{2},
\end{align*}
and hence $2\binom{n-m}{h-1}>\binom{n-1}{h-1}$.
\end{proof}

\begin{Lemma}\label{inequality}
Let $n$, $m$ and $h$ be integers such that $n\geq m\geq h\geq 2$ and $n>\frac{m-1}{1-2^{\frac{1}{1-h}}}+h-1$. Then for every $i\in[1,h]$,  $$(h-i)\left(\sum_{l=1}^{i+1}\binom{m-1}{h-l}\binom{n-m}{l-1}\right)\leq\binom{n-1}{h-1},$$
where the equality holds if and only if $i=h-1$.
\end{Lemma}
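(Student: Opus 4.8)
The plan is to fix $i\in[1,h]$ and analyze the function $S_i := (h-i)\sum_{l=1}^{i+1}\binom{m-1}{h-l}\binom{n-m}{l-1}$, comparing it with $\binom{n-1}{h-1}$. The natural starting point is Vandermonde's convolution \eqref{equ:comb_indetity} applied to $\binom{n-1}{h-1}$: writing $n-1=(m-1)+(n-m)$, one gets $\binom{n-1}{h-1}=\sum_{l=1}^{h}\binom{m-1}{h-l}\binom{n-m}{l-1}$ (reindexing so that $l-1$ is the number of elements taken from the block of size $n-m$). Thus the sum $\sum_{l=1}^{i+1}$ appearing in the statement is a \emph{truncation} of the full convolution for $\binom{n-1}{h-1}$: it is the partial sum of the first $i+1$ terms. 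So the whole problem reduces to showing that $(h-i)$ times this partial sum never exceeds the full sum, with equality exactly when $i=h-1$.

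First I would dispose of the boundary cases. When $i=h$, the factor $h-i=0$, so $S_h=0\leq\binom{n-1}{h-1}$ trivially, and the inequality is strict (so $i=h$ is not an equality case). When $i=h-1$, the partial sum $\sum_{l=1}^{h}$ is the \emph{complete} convolution, which equals $\binom{n-1}{h-1}$ exactly, and the factor $h-i=1$; hence $S_{h-1}=\binom{n-1}{h-1}$, giving equality. This identifies $i=h-1$ as the unique equality candidate and motivates the remaining claim: for $i\in[1,h-2]$ the inequality is strict.

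For $1\le i\le h-2$, I would bound the partial sum by the complete one plus controlling the multiplier. Write $T_i=\sum_{l=1}^{i+1}\binom{m-1}{h-l}\binom{n-m}{l-1}$ and $T=\binom{n-1}{h-1}=\sum_{l=1}^{h}\binom{m-1}{h-l}\binom{n-m}{l-1}$, so $T_i\le T-\binom{m-1}{h-h}\binom{n-m}{h-1}=T-\binom{n-m}{h-1}$ (dropping at least the last term, $l=h$, which is present because $i+1\le h-1<h$). We need $(h-i)T_i\le T$, i.e.\ $(h-i)\bigl(T-\binom{n-m}{h-1}\bigr)\ge (h-i)T_i$ would be the wrong direction, so instead I argue $(h-i)T_i\le (h-i)\bigl(T-\binom{n-m}{h-1}\bigr)$ and then show $(h-i)\bigl(T-\binom{n-m}{h-1}\bigr)\le T$ — equivalently $(h-i-1)T\le (h-i)\binom{n-m}{h-1}$. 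Since $h-i\le h-1$, it suffices to prove $(h-2)T\le (h-1)\binom{n-m}{h-1}$, i.e.\ $\frac{h-1}{h-2}\binom{n-m}{h-1}\ge T=\binom{n-1}{h-1}$ when $h\ge 3$; and for $h=2$ the only value $i=1=h-1$ is already the equality case, so nothing remains. The key input is Lemma \ref{for h-2}, which gives $2\binom{n-m}{h-1}>\binom{n-1}{h-1}$, i.e.\ $\frac{\binom{n-m}{h-1}}{\binom{n-1}{h-1}}>\tfrac12$; since $\frac{h-1}{h-2}\ge 1>\tfrac12$ is far too weak, I would instead directly use the ratio bound $\bigl(1-\frac{m-1}{n-h+1}\bigr)^{h-1}>\tfrac12$ from the proof of Lemma \ref{for h-2} together with a per-term comparison: each term $\binom{m-1}{h-l}\binom{n-m}{l-1}$ with $l\le i+1\le h-1$ is dwarfed by a corresponding term with more elements drawn from the $(n-m)$-block, so that the partial sum $T_i$ is itself at most $\frac{1}{h-i}T$ once $n$ is large enough in the sense quantified by the hypothesis $n>\frac{m-1}{1-2^{1/(1-h)}}+h-1$.

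The main obstacle I anticipate is making the last per-term (or ratio) estimate tight enough to match the multiplier $h-i$ for \emph{every} $i$ in the range, rather than just the worst case $i=1$ (which needs $T_1\le \frac{1}{h-1}T$, the hardest). Concretely, for $i=1$ one must show $(h-1)\bigl(\binom{m-1}{h-1}+\binom{m-1}{h-2}(n-m)\bigr)\le\binom{n-1}{h-1}$, and here the binomial $\binom{n-1}{h-1}$ grows like $(n-m)^{h-1}/(h-1)!$ while the left side grows only linearly in $n-m$ (for fixed $h$), so for large $n$ it is easy; the delicate point is verifying it holds down to the exact threshold. I expect this to follow by substituting $n-m\ge \frac{(m-1)\,2^{1/(h-1)}}{2^{1/(h-1)}-1}-(h-1)+m$... rather, by using the clean consequence $\binom{n-m}{h-1}>\tfrac12\binom{n-1}{h-1}$ of Lemma \ref{for h-2} inside an induction on $i$ that peels off one term of the convolution at a time, checking at each step that dropping the top term loses at least a factor $\frac{h-i-1}{h-i}$ of the total — which again reduces to the ratio bound already established. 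Packaging this induction cleanly, with the equality analysis threaded through, is the part that will require care.
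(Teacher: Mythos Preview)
Your boundary cases $i=h$ and $i=h-1$ are handled correctly and match the paper. The gap is in the range $1\le i\le h-2$ with $h\ge 4$. Your first attempt bounds $T_i\le T-\binom{n-m}{h-1}$ uniformly and then reduces to showing $\frac{h-1}{h-2}\binom{n-m}{h-1}\ge T$; as you yourself note, Lemma~\ref{for h-2} only gives $\binom{n-m}{h-1}>\tfrac12 T$, which is strictly weaker than $\binom{n-m}{h-1}\ge\frac{h-2}{h-1}T$ once $h\ge 4$. So this route is dead, and you acknowledge it. The trouble is that your fallback --- a ``per-term comparison'' or ``induction on $i$ that peels off one term at a time'' --- is asserted to ``reduce to the ratio bound already established'', and that is not true. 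The single inequality $2\binom{n-m}{h-1}>\binom{n-1}{h-1}$ does not by itself control the intermediate ratios you would need.

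What is actually required is the monotonicity of $f(i):=(h-i)T_i-\binom{n-1}{h-1}$ on $[1,h-2]$, so that the worst case is $i=h-2$, where $f(h-2)=\binom{n-1}{h-1}-2\binom{n-m}{h-1}<0$ follows directly from Lemma~\ref{for h-2}. The paper establishes $f(i+1)>f(i)$ by a second-difference argument: setting $g(i)=f(i+1)-f(i)=(h-i-1)\binom{m-1}{h-i-2}\binom{n-m}{i+1}-T_i$, one shows $g(i+1)-g(i)>0$ and $g(1)>0$ via explicit ratio estimates that use $n>hm$ (a consequence of the hypothesis when $h\ge4$), not merely the $\tfrac12$-bound of Lemma~\ref{for h-2}. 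Your proposal never identifies $i=h-2$ as the extremal index, never invokes $n>hm$, and leaves the inductive step as a hope rather than a computation; that is the missing idea.
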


\begin{proof} The case of $i=h$ is trivial. When $i=h-1$, it follows from \eqref{equ:comb_indetity} that
\begin{align}\label{eqn1}
\binom{n-1}{h-1}=\sum_{l=1}^{h}\binom{m-1}{h-l}\binom{n-m}{l-1}.
\end{align}
Assume that $i\in[1,h-2]$. Let $$f(i)=(h-i)\left(\sum_{l=1}^{i+1}\binom{m-1}{h-l}\binom{n-m}{l-1}\right)-\binom{n-1}{h-1}.$$
By Lemma \ref{for h-2},
\begin{align}\label{eqn:10-27}
f(h-2)&=2\left(\sum_{l=1}^{h-1}\binom{m-1}{h-l}\binom{n-m}{l-1}\right)-\binom{n-1}{h-1}\nonumber\\
&\overset{\eqref{eqn1}}{=}2\left(\binom{n-1}{h-1}-\binom{n-m}{h-1}\right)-\binom{n-1}{h-1}\nonumber\\
&=\binom{n-1}{h-1}-2\binom{n-m}{h-1}<0.
\end{align}
To prove $f(i)<0$ for any $i\in[1,h-3]$, which implies $h\geq 4$, we define $g(i):=f(i+1)-f(i)$ for every $i\in[1,h-3]$. If we can show that $g(i+1)>g(i)$ for $i\in[1,h-4]$ and $g(1)>0$, then $$g(h-3)>g(h-4)>\cdots>g(2)>g(1)>0,$$
and so $g(i)=f(i+1)-f(i)>0$ for every $i\in[1,h-3]$, which implies
$$f(h-2)>f(h-3)>\cdots>f(2)>f(1),$$
and hence it follows from $f(h-2)<0$ by \eqref{eqn:10-27} that $f(i)<0$ for any $i\in[1,h-3]$. Therefore, it suffices to show that $g(i+1)-g(i)>0$ for $i\in[1,h-4]$ and $g(1)>0$.

Since $h\geq i+4\geq 4$ and $n>\frac{m-1}{1-2^{\frac{1}{1-h}}}+h-1>hm$,
\begin{align*}
\frac{(h-i-2)\binom{m-1}{h-i-3}\binom{n-m}{i+2}}{(h-i)\binom{m-1}{h-i-2}\binom{n-m}{i+1}}
&=\frac{(h-i-2)^2(n-m-i-1)}{(h-i)(i+2)(m-h+i+2)}\nonumber\\
>\frac{hm-m-i-1}{(i+2)(m-h+i+2)}&>\frac{(h-1)(m-1)}{(i+2)(m-h+i+2)}>1,
\end{align*}
and hence
\begin{align*}
&g(i+1)-g(i)\\
=&(h-i-2)\binom{m-1}{h-i-3}\binom{n-m}{i+2}-(h-i)\binom{m-1}{h-i-2}\binom{n-m}{i+1}>0.
\end{align*}
Since $h\geq 4$ and $n-m>m-h+2$,
\begin{align*}
\frac{(h-2)\binom{m-1}{h-3}\binom{n-m}{2}}{\binom{m-1}{h-1}+(n-m)\binom{m-1}{h-2}}&=\frac{(n-m)(h-1)(h-2)^2(n-m-1)}{2(m-h+2)(m+(n-m-1)(h-1))}\nonumber\\
&\geq\frac{(n-m)(h-1)(h-2)^2}{2(m-h+2)h}>\frac{(h-1)(h-2)^2}{2h}>1,
\end{align*}
and so
\begin{align*}
g(1) & =f(2)-f(1)\\
& =(h-2)\binom{m-1}{h-3}\binom{n-m}{2}-\binom{m-1}{h-1}-(n-m)\binom{m-1}{h-2}>0.
\end{align*}
\end{proof}

%Let $\mathcal F$ be a hypergraph, let $U$ be some finite set, and let $\psi: V ({\mathcal F}) \rightarrow U$ be a surjective mapping.   Then, we define the hypergraph $\mathcal H$ by taking
%$V ({\mathcal H}) = U$ and $E ({\mathcal H}) = \{\psi(A) : A \in E ({\mathcal F})\}$. We say that $\mathcal H$ is an amalgamation of $\mathcal F$, and that $\mathcal F$
%is a detachment of $\mathcal H$. Associated with $\psi$ is a (number) function $g$ defined by $g(u)=|\psi^{-1}(u)|$; to be more specific, we will say that $\mathcal H$ is a $g$-detachment of $\mathcal G$.

\section{Proof of Theorem \ref{main-r-r}} \label{section-main}

We will use the following lemma to complete the proof of Theorem \ref{main-r-r}. The notation $x\approx y$ means $\lfloor y\rfloor\leq x\leq \lceil y\rceil$.

% This lemma was stated explicitly in \cite[Theorem 2.1]{a-jgt-2019}.

\begin{Lemma}\label{thm:B}{\rm \cite[Theorem 2.1]{a-jgt-2019}}
Let $\mathcal{H}$ be an ``unconventional'' $k$-edge-colored $t$-uniform hypergraph that allows edges to contain multiple copies of a vertex of $\mathcal{H}$. Then there exists a ``conventional'' $t$-uniform hypergraph $\mathcal F$ $($whose edges are all sets$)$ admitting a surjective mapping $\Psi: V (\mathcal F) \rightarrow V (\mathcal H)$ that naturally induces a bijection between $E(\mathcal F)$ and $E(\mathcal H)$, such that
\begin{enumerate}
\item[$(1)$] for each $u\in V(\mathcal H)$, each $v\in \Psi^{-1}(u)$ and $j\in [1,k]$,
$$\textbf{deg}_{\mathcal{F}(j)}(v) \approx \frac{\textbf{deg}_{\mathcal{H}(j)}(u)}{|\Psi^{-1}(u)|};$$
\item[$(2)$] for distinct $u_1,u_2,\ldots,u_s\in V(\mathcal H)$ and $U_l\subseteq \Psi^{-1}(u_l)$ with $|U_l|=m_l$ for $l\in [1,s]$, if $U_1\cup U_2\cup \cdots\cup U_s$ forms an edge of $\mathcal F$ $($which implies $\{u_1^{m_1},u_2^{m_2},\ldots,u_s^{m_s}\}$ forms an edge of $\mathcal H$$)$, then
$$\textbf{mult}_{\mathcal{F}}(U_1\cup U_2\cup \cdots\cup U_s)=\frac{ \textbf{mult}_{\mathcal{H}}(\{u_1^{m_1},u_2^{m_2},\ldots,u_s^{m_s}\}) }{\prod_{l=1}^s \binom{ |\Psi^{-1}(u_l)| }{m_l}}.$$
\end{enumerate}
where $u_l^{m_l}$ means $u_l$ occurs $m_l$ times for $l\in [1,s]$.
\end{Lemma}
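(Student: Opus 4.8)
The plan is to construct $\mathcal{F}$ from $\mathcal{H}$ by \emph{detaching} the vertices of $\mathcal{H}$ one at a time, controlling colored degrees and edge multiplicities at each step, and to reduce the whole construction to a single integer-rounding problem. I would induct on $\sum_{u\in V(\mathcal H)}(|\Psi^{-1}(u)|-1)$: the base case has all fibers of size one, so $\mathcal{F}=\mathcal{H}$ and $\Psi$ is the identity. For the inductive step I pick one vertex $u$ with $p:=|\Psi^{-1}(u)|\geq 2$, split it into $p$ new vertices $v_1,\dots,v_p$, and keep every other vertex amalgamated for now. The key structural observation is that the target formula in $(2)$ is \emph{multiplicative} over the vertices of $\mathcal H$: if at the step that splits $u$ I divide the multiplicity of every incident edge-type exactly by $\binom{p}{m}$, where $m$ is the multiplicity of $u$ in that type, then iterating over all vertices assembles the product $\prod_{l}\binom{|\Psi^{-1}(u_l)|}{m_l}$ automatically. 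Likewise the degree condition $(1)$ for an already-split vertex is untouched by later detachments of \emph{other} vertices, since those operations change neither the set of edges containing a given $v_i$ nor their colors. Hence it suffices to secure $(1)$ and $(2)$ for the single vertex being split at each stage.

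For this single detachment I group the colored edges incident to $u$ by \emph{type}: a pair $(R,m)$, where $m$ is the multiplicity of $u$ and $R$ is the remainder of the edge on the other (still-amalgamated) vertices. For each such edge I must choose an $m$-subset $S\subseteq\{v_1,\dots,v_p\}$ to carry the detached slots of $u$. Two demands must be met: (a) within each type the $\binom{p}{m}$ possible subsets are used exactly equally, each exactly $\textbf{mult}(R,m)/\binom{p}{m}$ times in total over colors, which realizes $(2)$; and (b) each copy $v_i$ receives, in every color $j$, a share of $\textbf{deg}_{\mathcal H(j)}(u)$ deviating from $\textbf{deg}_{\mathcal H(j)}(u)/p$ by less than $1$, which realizes $(1)$ under the convention (forced by total-degree preservation) that the degree of $u$ counts its multiplicity in each edge. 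I would first write down the \emph{fractional} solution in which every colored edge of type $(R,m)$ is split with equal weight $1/\binom{p}{m}$ over all $m$-subsets. A direct count using $\binom{p-1}{m-1}/\binom{p}{m}=m/p$ shows that fractionally \emph{both} (a) and (b) hold with exact equality, so the entire difficulty is pushed into rounding.

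It remains to round this fractional assignment to an integral one. I would set this up as an integer-making problem on the array indexed by (type, subset $S$, color $j$): the sums over $S$ must equal the prescribed per-color counts $M_{R,m,j}$ (preserving the edge-color bijection), the sums over colors on each pair $(\text{type},S)$ must hit the exact integer $\textbf{mult}(R,m)/\binom{p}{m}$ (condition (a)), and the derived quantities $\sum_{S\ni v_i}(\,\cdot\,)$ control the detached degrees (condition (b)). Applying Baranyai's iterative integer-making lemma — repeatedly pushing a fractional entry up or down around an alternating cycle of the underlying network — I can force the first two families of marginals to the integers they already equal, while each degree quantity in (b), being a sum of rounded entries, moves from its fractional value by at most $1$, giving the required $\approx$. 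Note that the divisibility of $\textbf{mult}(R,m)$ by $\binom{p}{m}$ is implicit in the statement of $(2)$ itself, whose right-hand side must be an integer, and it is exactly what allows (a) to be met \emph{exactly} rather than only to within one. The main obstacle is precisely this simultaneity: the rounding must preserve the multiplicity marginals \emph{exactly} while allowing the colored-degree marginals to drift by \emph{at most one}, so the technical heart is organizing the laminar/network structure of these coupled constraints so that a single rounding scheme respects all of them at once. The remaining work — combining the one-vertex splits into $\Psi$, and verifying the product formula in $(2)$ and the $\approx$ relation in $(1)$ compose correctly across steps — is routine bookkeeping.
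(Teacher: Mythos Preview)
The paper does not prove this lemma; it is quoted from \cite[Theorem~2.1]{a-jgt-2019} and invoked as a black box in Step~3 of the main proof. There is no in-paper argument to compare your proposal against.

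Your outline is the standard amalgamation--detachment strategy, which is indeed how the cited source proceeds: split the vertices of $\mathcal H$ one at a time, exploit the multiplicative form of condition~(2) to reduce to a single-vertex detachment, and at each such step round a fractional equal-split assignment to an integral one. Your structural observations are correct --- condition~(1) for an already-detached vertex is preserved by later detachments of other vertices, and the product in~(2) assembles automatically. You also correctly locate the crux: the rounding must hit the multiplicity marginals \emph{exactly} (this is where the divisibility hypothesis implicit in~(2) enters) while letting each colored-degree marginal drift by at most one. The only point your sketch leaves genuinely open is that the three families of constraints you list do not sit as rows and columns of a single matrix, so Baranyai's rounding lemma does not apply verbatim; in the source this is handled either by a network-flow formulation (integral capacities yield an integral optimum) or by peeling off the $p$ copies one at a time with careful bookkeeping of the $\approx$ bounds. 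Either route closes the gap you flag.
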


In Lemma \ref{thm:B}, $\mathcal F$ is obtained by splitting each vertex of $\mathcal H$ into certain vertices in $\mathcal F$. The condition (1) guarantees that the degree of each vertex in each color class of $\mathcal H$ is shared evenly among the corresponding vertices in the same color class in $\mathcal F$. The condition (2) guarantees that the multiplicity of each edge in $\mathcal H$ is shared evenly among the corresponding edges in $\mathcal F$.

Now we adopt a similar yet more meticulous approach from \cite[Theorem 1.1]{arxiv202209} to prove Theorem \ref{main-r-r}.

\begin{proof}[\bf Proof of Theorem \ref{main-r-r}] The necessity is obvious. We examine its sufficiency. Let $\textbf{r}=(r_1,\ldots,$ $r_k)$ and let $\mathcal{G}$ be the given partially $\textbf{r}$-factorized $\lambda K_m^h$. We shall extend $\mathcal{G}$ to an $\textbf{r}$-factorized $\lambda K_n^h$, where $n>\frac{m-1}{1-2^{\frac{1}{1-h}}}+h-1$ and $(n,h,\lambda,\textbf{r})$ is admissible.

First we construct an ``unconventional'' hypergraph $\mathcal{H}$ that allows edges to contain multiple copies of a vertex of $\mathcal{H}$. Specifically, $\mathcal{H}$ is obtained by adding a new vertex $\alpha$ and $\sum_{i=1}^h\lambda\binom{m}{h-i}\binom{n-m}{i}$ new edges to $\mathcal{G}$ such that for each $i\in[1,h]$, $X\subseteq V(\mathcal{G})$ and  $|X|=h-i$,
$$\textbf{mult}_{\mathcal{H}}(X\cup\{\alpha^{i}\})=\lambda\binom{n-m}{i},$$
where $\alpha^{i}$ means $i$ copies of $\alpha$ and $X\cup\{\alpha^{i}\}$ is an edge in $\mathcal{H}$ consisting of $h$ vertices. For convenience, for some $X\subseteq V(\mathcal{G})$, we simply write $X\alpha^i$-edge or $*\alpha^i$-edge instead of the edge $X\cup\{\alpha^i\}$ in $\mathcal{H}$, and the $*\alpha^h$-edges in $\mathcal{H}$ are simply written as $\alpha^h$-edges. Note that the edges of $\mathcal{G}$ can be regarded as the $*\alpha^0$-edges. By assumption, all the $*\alpha^0$-edges have been colored by $k$ colors such that its color class $j$ for every $j\in[1,k]$ induces a spanning subhypergraph of $\mathcal{G}$ in which the degree of each vertex is at most $r_j$.

\textbf{Step $1$.} Assume that we have colored all the $*\alpha^0$-edges, $\ldots$, $*\alpha^{i-1}$-edges of $\mathcal{H}$ for some $i\in [1,h-1]$ in ascending order of $i$. We claim that we can greedily color all the $*\alpha^{i}$-edges of $\mathcal{H}$ so that $\textbf{deg}_{\mathcal{H}(j)}(x)\leq r_j$ for each $x\in V(\mathcal{G})$ and $j\in[1,k]$, where $\mathcal{H}(j)$ denotes the ``current'' color class $j$ of $\mathcal{H}$. %(note that there are uncolored edges in $\mathcal H$ in the current stage).

Suppose by contradiction that some edge $X\cup\{\alpha^i\}$ in $\mathcal{H}$ cannot be colored, where $X$ is an $(h-i)$-subset of $V(\mathcal{G})$. Then for each $j\in[1,k]$, there is some $x\in X$ such that $\textbf{deg}_{\mathcal{H}(j)}(x)=r_j$ (note that $\mathcal H_{(j)}$ is the color class $j$ of $\mathcal H$ before the edge $X\cup\{\alpha^i\}$ is colored), and consequently, for all $j\in[1,k]$, $\sum_{x\in X}\textbf{deg}_{\mathcal{H}(j)}(x)\geq r_j$. On one hand,
\begin{align}\label{eqn:12-12-1}
\sum_{j=1}^{k}\sum_{x\in X}\textbf{deg}_{\mathcal{H}(j)}(x)\geq\sum_{j=1}^{k}r_j\overset{\eqref{nec}}{=}\lambda\binom{n-1}{h-1}.
\end{align}
On the other hand, the number of $*\alpha^{l}$-edges in $\mathcal{H}$ for each $l\in[0,i]$ containing any given vertex of $\mathcal G$ is $\lambda\binom{n-m}{l}\binom{m-1}{h-l-1}$, so counting the edges colored so far, we have
\begin{align*}
\sum_{j=1}^{k}\sum_{x\in X}\textbf{deg}_{\mathcal{H}(j)}(x)< & |X|\sum_{l=0}^i \lambda\binom{n-m}{l}\binom{m-1}{h-l-1} \\
= & \lambda(h-i)\left(\sum_{l=1}^{i+1}\binom{m-1}{h-l}\binom{n-m}{l-1}\right).
\end{align*}
Therefore,
\begin{align}\label{eqn:12-12}
\lambda\binom{n-1}{h-1}<\lambda(h-i)\left(\sum_{l=1}^{i+1}\binom{m-1}{h-l}\binom{n-m}{l-1}\right),
\end{align}
which contradicts Lemma \ref{inequality}.

Note that after coloring all the $*\alpha^{h-1}$-edges of $\mathcal{H}$, we have that for each $x\in V(\mathcal{G})$,
\begin{align}\label{eqn:12-6}
\sum_{j=1}^{k}\textbf{deg}_{\mathcal{H}(j)}(x)
=\lambda\left(\sum_{l=0}^{h-1}\binom{n-m}{l}\binom{m-1}{h-l-1}\right)
\overset{\eqref{eqn1}}{=}\lambda\binom{n-1}{h-1}
\overset{\eqref{nec}}{=}\sum_{j=1}^{k}r_j.
\end{align}
Since our coloring strategy requires $\textbf{deg}_{\mathcal{H}(j)}(x)\leq r_j$ for each $j\in[1,k]$, it follows from \eqref{eqn:12-6} that after coloring all the $*\alpha^{h-1}$-edges of $\mathcal{H}$, for each $x\in V(\mathcal{G})$ and $j\in[1,k]$,
\begin{align}\label{x}
\textbf{deg}_{\mathcal{H}(j)}(x)=r_j.
\end{align}

\textbf{Step $2$.} For $i\in [0,h-1]$ and $j\in[1,k]$, let $t_{ij}$ be the number of $*\alpha^i$-edges in  $\mathcal{H}$ colored $j$. Then by \eqref{x}, for $j\in[1,k]$,
\begin{align}\label{eqn:12-6-1}
r_jm=\sum_{i=0}^{h-1}(h-i)t_{ij}.
\end{align}
We now color all the $\alpha^{h}$-edges so that for each $j\in[1,k]$, the number of $\alpha^h$-edges in  $\mathcal{H}$ colored $j$ is
\begin{align}\label{eqn:12-7}
t_{hj}:=\frac{r_jn}{h}-\sum_{i=0}^{h-1}t_{ij}.
%t_{hj}:=\textbf{mult}_{\mathcal{H}(j)}(\alpha^h)=\frac{r_jn}{h}-\sum_{i=0}^{h-1}t_{ij}.
\end{align}
Note that since $(n,h,\lambda,\textbf{r})$ is admissible and $n>\frac{m-1}{1-2^{\frac{1}{1-h}}}+h-1$,

$$t_{hj}\overset{\eqref{eqn:12-6-1}}{=}\frac{r_jn}{h}-r_jm+\sum_{i=0}^{h-2}(h-i-1)t_{ij}$$
is a nonnegative integer for any $j\in[1,k]$. There are $\lambda\binom{n-m}{h}$ $\alpha^{h}$-edges in $\mathcal H$, and hence the following argument confirms that all $\alpha^{h}$-edges can be colored as desired.
\begin{align*}
\sum_{j=1}^{k}t_{hj}&\overset{\eqref{eqn:12-7}}{=}\sum_{j=1}^{k}\left(\frac{r_jn}{h}-\sum_{i=0}^{h-1}t_{ij}\right)=\lambda\binom{n}{h}-\lambda\sum_{i=0}^{h-1}\binom{m}{h-i}\binom{n-m}{i}\\
&\overset{\eqref{equ:comb_indetity}}{=}\lambda\binom{n}{h}-\lambda\left(\binom{n}{h}-\binom{n-m}{h}\right)=\lambda\binom{n-m}{h}.
\end{align*}
After coloring all the $\alpha^h$-edges of $\mathcal{H}$, for each $j\in[1,k]$, the total number of occurrences of $\alpha$ among all edges colored by $j$ in $\mathcal{H}$ is
\begin{align}\label{eqn:12-7-1}
\textbf{deg}_{\mathcal{H}(j)}(\alpha) & = \sum_{i=1}^hit_{ij}=
h\sum_{i=0}^{h}t_{ij}-\sum_{i=0}^{h-1}(h-i)t_{ij} \nonumber\\
& \overset{\eqref{eqn:12-6-1} \eqref{eqn:12-7}}{=}r_jn-r_jm=r_j(n-m).
\end{align}

\textbf{Step $3$.} By Lemma \ref{thm:B}, there exists an $n$-vertex hypergraph $\mathcal{F}$ obtained by replacing the vertex $\alpha$ of $\mathcal{H}$ by $n-m$ new vertices $\alpha_1,\ldots,\alpha_{n-m}$ in $\mathcal{F}$ and replacing each $X\alpha^i$-edge by an edge of the form $X\cup U$ where $U\subseteq\{\alpha_1,\ldots,\alpha_{n-m}\}$, $|U|=i\in[1,h]$ such that
\begin{itemize}
    \item[$(1)$] for $i\in[1,n-m]$ and $j\in[1,k]$,
        \begin{align*}
            \textbf{deg}_{\mathcal{F}(j)}(\alpha_i)=
            \frac{\textbf{deg}_{\mathcal{H}(j)}(\alpha)}{n-m}\overset{\eqref{eqn:12-7-1}}{=}\frac{r_j(n-m)}{n-m}=r_j;
        \end{align*}
        and for $x\in V(\mathcal G)$ and $j\in[1,k]$,
        \begin{align*}
            \textbf{deg}_{\mathcal{F}(j)}(x)=
            \frac{\textbf{deg}_{\mathcal{H}(j)}(x)}{1}\overset{\eqref{x}}{=}r_j;
        \end{align*}
  \item[$(2)$] for $U\subseteq\{\alpha_1,\ldots,\alpha_{n-m}\}$, $|U|=i\in[0,h]$, $X\subseteq V(\mathcal{G})$ and $|X|=h-i$,
        \begin{align}
            \textbf{mult}_{\mathcal{F}}(X\cup U)=\frac{\textbf{mult}_{\mathcal{H}}(X\alpha^i)}{\binom{n-m}{i}}=\frac{\lambda\binom{n-m}{i}}{\binom{n-m}{i}}=\lambda.\nonumber
        \end{align}
\end{itemize}
By $(1)$, $\mathcal{F}(j)$ is an $r_j$-factor for each $j\in[1,k]$, and by $(2)$, $\mathcal{F}\cong\lambda K_n^h$.
\end{proof}

\section{Concluding remarks}\label{sec:concluding}

For $\textbf{r}=(r_1,\ldots,r_k)$, this paper examines the conditions under which a partial $\textbf{r}$-factorization of $\lambda K_m^h$ can be extended to an $\textbf{r}$-factorization of $\lambda K_n^h$. The proof of Theorem \ref{main-r-r} mirrors that of \cite[Theorem 1.1]{arxiv202209}, yet we have refined it to be more concise and have derived an improved lower bound for $n$.

As stated in Section \ref{sec:intro}, it was pointed out in \cite[Theorem 1.7]{BN} that when $\gcd(m,n,h)$ $=\gcd(m,h)$, an $r$-factorization of $K_m^h$ can be extended to an $r$-factorization of $K_n^h$ with $n>m$ if and only if $n\geq 2m$ and obvious necessary divisibility conditions are satisfied. This implies that the tight lower bound for $n$ in Theorem \ref{main-r-r} might be $2m$. Examining the proof of Theorem \ref{main-r-r}, we observe that the bound $n>\frac{m-1}{1-2^{\frac{1}{1-h}}}+h-1$ was used to create a contradiction to \eqref{eqn:12-12}, which stems from an estimation of $\sum_{j=1}^{k}\sum_{x\in X}\textbf{deg}_{\mathcal{H}(j)}(x)$. Given our limited understanding of the structure of the given partial $\textbf{r}$-factorization of $\lambda K_m^h$, we provided a very rough lower bound \eqref{eqn:12-12-1} for $\sum_{j=1}^{k}\sum_{x\in X}\textbf{deg}_{\mathcal{H}(j)}(x)$. That is why we are unable to tighten our lower bound for $n$ to $2m$.

The essence of Step 1 in the proof of Theorem \ref{main-r-r} is coloring a non-uniform hypergraph defined on $[1,m]$ with edges from $\bigcup_{i=1}^{h-1} \lambda\binom{n-m}{h-i}\binom{[1,m]}{i}$, where $n>m$ and $\binom{[1,m]}{i}$ is the family of all $i$-subsets of $[1,m]$, each $i$-subset occurring exactly $\lambda\binom{n-m}{h-i}$ times. The interested reader is referred to \cite{hhm,JSZ} for a recent work on examining the existence of a 1-factorization of the hypergraph $GK_n^{\leq h}$ consisting of all subsets of $[1,n]$ of size up to $h$, where each $i$-subset with $i\leq h$ occurs $\lambda_i$ times as edges.

%\section*{Acknowledgments}

%The authors thank the anonymous reviewers for their useful comments that have improved the readability of this paper.

%\section*{Statements and Declarations}

%\subsection*{Funding}
%
%This work was supported by the Fundamental Research Funds for the Provincial Universities of Zhejiang under Grant SJLY2020008 and supported by NSFC under Grant 12271023.
%
%\section*{Conflict of interest}
%
%On behalf of all authors, the corresponding author states that there is no conflict of interest.
%
%\subsection*{Data availability}
%
%This manuscript has no associated data.

%\section*{Statements and Declarations}
%
%\subsection*{Funding}
%
%This work was supported by the Fundamental Research Funds for the Provincial Universities of Zhejiang under Grant SJLY2020008 and supported by NSFC under Grant 12271023.
%
%\subsection*{Conflict of interest}
%
%The authors have no relevant financial or non-financial interests to disclose.
%
%\subsection*{Data availability}
%
%This manuscript has no associated data.


\begin{thebibliography}{1}

%\bibitem{cpc2012}
%A. Bahmanian, \textit{Detachments of hypergraphs I: the Berge-Johnson problem}, Combin. Probab. Comput., \textbf{21} (2012), 483--495.

\bibitem{a-jgt-2019}
A. Bahmanian, \textit{Extending edge-colorings of complete hypergraphs into regular colorings}, J. Graph Theory, \textbf{90} (2019), 547--560.

\bibitem{arxiv202209}
A. Bahmanian and A. Johnsen, \textit{Embedding irregular colorings into connected factorizations}, arXiv:2209.06402.

%\bibitem{jctb}
%A. Bahmanian, A. Johnsen, and S. Napirata, \textit{Embedding connected factorizations}, J. Combin. Theory Ser. B, \textbf{155} (2022), 358--373.

\bibitem{BN}
A. Bahmanian and M. Newman, \textit{Extending factorizations of complete uniform hypergraphs}, Combinatorica, \textbf{38} (2018), 1309--1335.

\bibitem{br2013}
A. Bahmanian and C.A. Rodger, \textit{Embedding factorizations for $3$-uniform hypergraphs}. J. Graph Theory, \textbf{73} (2013), 216--224.

\bibitem{Baranyai}
Z.~Baranyai,  \textit{On the factorization of the complete uniform hypergraph}, in Infinite and Finite Sets, Dedicated to P.~Erd\H{o}s on his 60th birthday, Vol. I, Colloq., Keszthely, 1973, in: Colloq. Math. Soc. J\'{a}n\={o}s Bolyai, Vol. 10, North-Holland, Amsterdam, 1975, 91--108.

\bibitem{Baranyai79}
Z.~Baranyai,  \textit{The edge-coloring of complete hypergraphs I.}, J. Combin. Theory Ser. B, \textbf{26} (1979), 276--294.

\bibitem{Baranyai-B}
Z.~Baranyai and A.E.~Brouwer, \textit{Extension of colourings of the edges of a complete (uniform hyper)graph}, Technical report, Mathematisch Centrum Amsterdam, Math. Centre Report ZW91, Zbl. 362.05059, 1977.

\bibitem{hh}
R.~H\"{a}ggkvist and T.~Hellgren, \textit{Extensions of edge-colourings in hypergraphs. I, in Combinatorics}, Paul Erd\H{o}s Is Eighty, Vol. 1, in Bolyai Soc. Math. Stud., J\'{a}n\={o}s Bolyai Math. Soc., Budapest, 1993, 215--238.

\bibitem{hhm}
J.~He, H.~Huang, and J.~Ma, \textit{A non-uniform extension of Baranyai's theorem}, arXiv:2207.00277.

\bibitem{JSZ}
T.~Jiang, Q.~Sun, and C.~Zhang, \textit{A note on the 1-factorization of non-uniform complete hypergraph}, Appl. Math. Comput., \textbf{485} (2025), 129007.

\bibitem{p}
R.~Peltesohn, \textit{Das Turnierproblem f\"{u}r Spiele zu je dreien}, Inaugural dissertation, Berlin 1936.

\bibitem{rw}
C.A. Rodger and E.B. Wantland, \textit{Embedding edge-colorings into $2$-edge-connected $k$-factorizations of $K_{kn+1}$}, J. Graph Theory, \textbf{19} (1995), 169--185.

%\bibitem{latex} G. Gr\"atzer, \textit{Math into \LaTeX.} 3rd Edition,
%Birkh\"auser, 2000.

\end{thebibliography}
\end{document}